\numberwithin{equation}{section}
\newtheorem{lemma}{Lemma}[section]
\newtheorem{proposition}[lemma]{Proposition}
\newtheorem{theorem}[lemma]{Theorem}
\theoremstyle{definition}
\theoremstyle{remark}
\newtheorem{remark}[lemma]{Remark}
\newdimen\uboxsep \uboxsep=1ex
\def\uboxn#1{\vtop to 0pt{\hrule height 0pt depth 0pt\vskip\uboxsep
\hbox to 0pt{\hss #1\hss}\vss}}
\def\uboxs#1{\vbox to 0pt{\vss\hbox to 0pt{\hss #1\hss}
\vskip\uboxsep\hrule height 0pt depth 0pt}}
\newcommand{\DMO}{\DeclareMathOperator}
\newcommand{\beq}{\begin{equation}}
\newcommand{\eeq}{\end{equation}}
\newcommand{\ang}[1]{\langle #1 \rangle}
\newcommand{\st}{\left\vert\right.}
\newcommand{\ssm}{\smallsetminus}
\DMO{\id}{{Id}}
\DMO{\Hom}{{Hom}}
\DMO{\End}{{End}}
\DMO{\Ext}{{Ext}}
\DMO{\ext}{{ext}}
\DMO{\Tor}{Tor}
\DMO{\Aut}{{Aut}}
\DMO{\Proj}{Proj}
\DMO{\Spec}{Spec}
\DMO{\codim}{codim}
\DMO{\reg}{reg}
\DMO{\len}{len}
\DMO{\red}{red}
\DMO{\pd}{pd}
\DMO{\hd}{hd}
\DMO{\Ker}{Ker}
\DMO{\coker}{Coker}
\DMO{\im}{Im}
\DMO{\cohdim}{cd}
\DMO{\chrr}{char}
\DMO{\Skl}{Skl}
\DMO{\Inn}{Inn}
\DMO{\GK}{GKdim}
\DMO{\gr}{gr}
\DMO{\Div}{Div}
\DMO{\Bs}{Bs}
\DMO{\sing}{sing}
\DMO{\Qgr}{Qgr}
\DMO{\op}{op}
\DMO{\Ann}{Ann}
\DMO{\sat}{sat}
\DMO{\grade}{grade}
\DMO{\GKdim}{GKdim}
\DMO{\hilb}{hilb}
\DMO{\gldim}{gldim}
\DMO{\Tot}{Tot}
\DMO{\Pic}{Pic}
\DMO{\Bir}{Bir}
\newcommand{\mc}{\mathcal}
\newcommand{\mb}{\mathbb}
\renewcommand{\AA}{{\mb A}}
\newcommand{\NN}{{\mb N}}
\newcommand{\PP}{{\mb P}}
\newcommand{\ZZ}{{\mb Z}}
\newcommand{\kk}{{\Bbbk}}
\DMO{\shHom}{\mathcal{H}\!\mathit{om}}
\DMO{\shExt}{\mathcal{E}\!\mathit{xt}}
\DMO{\shTor}{\mathcal{T}\!\mathit{or}}
\newcommand{\sL}{\mc{L}}
\DMO{\rgr}{gr-\!}
\DMO{\lgr}{\!-gr}
\DMO{\lGr}{\!-Gr}
\DMO{\rGr}{Gr-\!}
\DMO{\rmod}{mod-\!}
\DMO{\lmod}{\!-mod}
\DMO{\lMod}{\!-Mod}
\DMO{\rMod}{Mod-\!}
\DMO{\mmod}{mod}
\DMO{\rmodu}{mod^{u}-\!}
\DMO{\lmodu}{\!-mod^{u}}
\DMO{\lQgr}{\!-Qgr}
\DMO{\lqgr}{\!-qgr}
\DMO{\lProj}{\!-Proj}
\DMO{\rQgr}{Qgr-\!}
\DMO{\rqgr}{qgr-\!}
\DMO{\rProj}{Proj-\!}
\DMO{\rTors}{Tors-\!}
\DMO{\lTors}{\!-Tors}
\DMO{\rtors}{tors-\!}
\DMO{\ltors}{\!-tors}
\DMO{\Xyz}{Xyz}
\DMO{\xyz}{xyz}
\DMO{\Bl}{Bl}
\newcommand{\ol}{\overline}
\newcommand{\fk}{\kk}
\title{Generalised Witt algebras and idealizers}
\author{S. J. Sierra and \v{S}. \v{S}penko}
\date{\today}
\keywords{generalised Witt algebra, intermediate series representation, idealizer}
\subjclass[2010]{16W50,17B35}
\begin{document}

\begin{abstract}
Let $\kk$ be an algebraically closed  field of characteristic zero, and let $\Gamma$ be an additive subgroup of $\kk$. 
Results of Kaplansky-Santharoubane  and Su classify  intermediate series representations of the {\em generalised Witt algebra} $W_\Gamma$ in terms of three families, one parameterised by $\AA^2$ and two by $\PP^1$.  
In this note, we use the first family to construct a homomorphism $\Phi$ from the enveloping algebra $U(W_\Gamma)$ to a skew extension $\kk[\AA^2]\rtimes \Gamma$ of the coordinate ring of $\AA^2$. We show that the image of $\Phi$ is contained in a (double) idealizer subring of this skew extension and  that the representation theory of idealizers  explains the three families. 
We further show that the image of $U(W_\Gamma)$ under $\Phi$ is not left or right noetherian, giving a new proof that $U(W_\Gamma)$ is not   noetherian.

We construct $\Phi$ as an application of a general technique to create ring homomorphisms from shift-invariant families of modules.
Let $G$ be an arbitrary group and let $A$ be a $G$-graded ring.  A graded $A$-module $M$ is an {\em intermediate series} module if $M_g$ is one-dimensional for all $g \in G$.  
Given a shift-invariant family of intermediate series $A$-modules parametrised by a scheme $X$, we construct a homomorphism $\Phi$ from $A$ to a skew extension of $\kk[X]$.  
The kernel of $\Phi$ consists of those elements which annihilate all modules in $X$.  
\end{abstract}

\maketitle
\section{Introduction}

Fix an algebraically closed  ground field $\kk$ of characteristic zero, and 
let $\Gamma$ be a finitely generated additive subgroup of $\kk$.  
The {\em generalised Witt algebra} $W_\Gamma$ is the Lie algebra generated by elements $e_\gamma: \gamma \in \Gamma$, 
with $[e_\gamma, e_\delta] = (\delta-\gamma) e_{\delta+\gamma}$.
Recall that an {\em intermediate series representation } of $W_\Gamma$ is an indecomposable representation all of whose $e_0$-eigenspaces are 1-dimensional. 
It is a theorem of Kaplansky and Santharoubane \cite{KaplanskySantharoubane} (if $\Gamma = \ZZ$) and of Su \cite{SuVirasoro} (for general $\Gamma$) that intermediate series representations of $W_\Gamma$ come in three families (with two modules represented twice):  one family parameterised by $\AA^2$ and two parameterised by $\PP^1$.
In this note we use the first family to construct a homomorphism $\Phi$ from $U(W_\Gamma)$ to $T = \kk[\AA^2] \rtimes \Gamma$, and show  that the existence of the other two families is a consequence of  the fact that the image of $U(W_\Gamma)$ is a sub-idealizer in $T$.
We further use the homomorphism $\Phi$ to give a new proof that the enveloping algebra of $U(W_\Gamma)$ is not noetherian, a fact originally proved in \cite{SierraWalton1}.

Since our main method is to construct and then analyze a homomorphism from $U(W_\Gamma)$ to an idealizer in $T$, 
we recall some facts about idealizers.
We first define $T$:  as a vector space  we write $T = \bigoplus_{\gamma \in \Gamma} \kk[a,b] t^\gamma$, with $t^\gamma t^\delta = t^{\gamma+\delta}$ and $t^\gamma f (a,b)= f(a+\gamma, b) t^\gamma = : f^\gamma t^\gamma$.  
Note that $T$ is a bimodule over $\kk[a,b]$. 

An {\em intermediate series} module $M$ over a $\Gamma$-graded ring is an indecomposable $\Gamma$-graded module with each $M_\gamma$ a one-dimensional vector space.  
It is a generalisation of a point module over an $\NN$-graded ring, which  is a cyclic graded module with Hilbert series $1/(1-t)$.

For $p = (\alpha, \beta) \in \AA^2$, let $I(p) $ be the ideal $ (a-\alpha, b-\beta) $ of $ \kk[a,b]$.  
Let  $V(p) = T/I(p)T$.   It is easy to see that the $V(p)$ are all of  the intermediate series right $T$-modules; more precisely, the right ideals $J$ of $T$ such that $T/J$ is an intermediate series module are precisely the $ I(p)T$.  
Likewise, the  intermediate series left $T$-modules are the $T/TI(p)$.  
These families are preserved under degree shifting.

We now consider a subring of $T$.  
Fix $p_0 \in \AA^2$, and let $S= S(p_0) = \kk \oplus  I(p_0) T$.
The ring $S$ is an {\em idealizer} in $T$:  the largest subalgebra of $T$ such the right  ideal $ I(p_0)T$ becomes a two-sided ideal in $S$.
It is known \cite{Rogalski} that the representation theory of idealizers  involves blowing up.
Here for  $p \neq p_0$ we have that $V(p) \cong S/(S\cap  I(p)T )$ is  an intermediate series right $S$-module.
On the other hand, to define an intermediate series right $S$-module at $p_0$, we need to consider a point $q$ {\em infinitely near} to $p_0$:  that is, an ideal $I(q)$ with $I(p_0)^2 \subseteq I(q) \subseteq I(p_0)$ of $\kk[a,b]$ such that $I(p_0)/I(q)$ is one-dimensional.  
Such ideals are parameterised by the exceptional $\PP^1$ in the blowup $\Bl_{p_0}(\AA^2)$; more specifically, we can write 
 $I(q) = (y(a-\alpha)-x(b-\beta),(a-\alpha)^2,(a-\alpha)(b-\beta),(b-\beta)^2)$ for some $[x:y] \in \PP^1$.
For such $I(q)$ we have that  $I(p_0) +  I(q)T$ is a right ideal of $S$.
Let 
\[P(q) = S/(I(p_0)+ I(q)T).\] 
Then $P(q)$ is a  intermediate series right $S$-module.  
In fact, we have  constructed all right ideals $J$ of $S$ such that $S/J$ is an intermediate series $S$-module; they are parameterised by $\Bl_{p_0}(\AA^2)$ but it is sometimes more convenient to consider them as parameterised by $\AA^2\ssm \{ p_0 \} $ together with   $\PP^1$.

{\em Left} intermediate series $S$-modules are also parameterised by $\Bl_{p_0}(\AA^2)$. 
For $p \in \AA^2\ssm \{ p_0 \} $, the left intermediate series module $T/TI(p)$ is isomorphic to  
$\left(I(p_0)\oplus \bigoplus_{0\neq \nu\in \Gamma}\fk[a,b]t^{\nu}\right)/\left((I(p_0)\cap I(p))\oplus \bigoplus_{0\neq \nu\in \Gamma}t^\nu I(p)\right)$.
  We can extend this construction to a family of modules  parameterised by $\Bl_{p_0}(\AA^2)$ by adding the $\PP^1$ of points $q$ infinitely near to $p_0$: 
\[
Q(q)= \frac{I(p_0)\oplus \bigoplus_{0\neq \nu\in \Gamma}\fk[a,b]t^{\nu}}{I(q)\oplus \bigoplus_{0\neq \nu\in \Gamma}t^\nu I(p_0)}.
\] 

Consider now right intermediate series modules over the double idealiser 
\[ R = \kk[a,b] + (I(p_0)T \cap T I(p_1))\]
 and assume for simplicity  that $p_0, p_1 \in \AA^2$ have distinct $\Gamma$-orbits.
These correspond to points of the double blowup $\Bl_{p_0,p_1}(\AA^2)$.
More precisely, the $V(p)$ are intermediate series modules for $p \in \AA^2 \ssm\{p_0, p_1\}$.
From the inclusion $R \subseteq \kk \oplus I(p_0)T$ we obtain a family $P(q)$ parameterised by the $\PP^1$ of points infinitely near to $p_0$.
Finally, from the inclusion $R \subseteq \kk \oplus TI(p_1)$ we obtain a family $Q(q)$ of right modules parameterised by the $\PP^1$ of points infinitely near to $p_1$ and constructed similarly to the construction of the left modules $Q(q)$ over $S$.  

Let $\Gamma$ now be an arbitrary group (more generally, a monoid) and let $A$ be a $\Gamma$-graded ring. 
We give a general result in Theorem \ref{thm:right} (respectively, Theorem \ref{thm:left}) which constructs a ring homomorphism (respectively, an anti-homomorphism) $\Phi:  A\to \kk[X] \rtimes \Gamma$, where $X$ is a  shift-invariant family of right (respectively, left) intermediate series $A$-modules; this generalises constructions in \cite{ATV1991,RZ2008,VdBtranslation}.

When we apply this technique to $U(W_\Gamma)$, we show that the image of $\Phi$ is contained in a double idealizer $R$ inside the ring $T$ defined in the second paragraph, and we show in Propositions \ref{prop:P}, \ref{prop:Q} that the right intermediate series $R$-modules constructed above restrict to precisely the intermediate series representations of $W_\Gamma$.
This gives a unified geometric description of what have until now been seen as three distinct families of representations.

We further show in  Proposition \ref{prop:noetherian} that the image of $U(W_\Gamma)$ {}under $\Phi$ is neither right nor left noetherian.
For $\Gamma = \ZZ$ this was proved in \cite{SierraWalton2} as the main step in proving the non-noetherianity of $U(W)$.
 It follows that $U(W_\Gamma)$ is neither right or left noetherian; other proofs are given in  \cite{SierraWalton1,SierraWalton2}.  

 The general behaviour of  idealizers leads one to expect  that at idealizers in $T$ at ideals of points in $\kk[a,b]$ will not be noetherian since no points have dense $\Gamma$-orbits; see \cite{Sierra1} for a precise statement of a related result for $\NN$-graded rings. 
However, infinite orbits are dense in $\AA^1$.  Thus one expects that the factors $\Phi(U(W_\Gamma))|_{b=\beta}$, which live on the $\Gamma$-invariant line $(b=\beta)$ in $\AA^2$,  are noetherian for all $\beta \in \kk$, and we also show in Proposition \ref{prop:noetherian2} that this is indeed  the case.

{\bf Acknowledgements: }  
 We thank Jacques Alev and Lance Small for useful discussions.
The second author is supported, and the first author  is partially supported, by 
 EPSRC grant EP/M008460/1.

\section{Intermediate series modules and ring homomorphisms}\label{sec1}

It is well-known that ring homomorphisms can be constructed from shift-invariant families of modules.  
Let $A$ be a (connected $\NN$-) graded ring, generated in degree 1.
A {\em point module} over $A$ is a cyclic graded $A$-module with Hilbert series $1/(1-t)$.  Suppose that (right) $A$-point modules are parameterised by a projective scheme  $X$.
Let the point module corresponding to $x \in X$ be $M^x$.  
Then the shift functor $\Psi:  M \mapsto M[1]_{\geq 0}$ induces an automorphism $\sigma$ of $X$ so that $\Psi(M^x) \cong M^{\sigma(x)}$.  

The following result goes back to \cite{ATV1990} (see also \cite{VdBtranslation}), although in this form it is due to Rogalski and Zhang.

\begin{theorem} \label{thm:RZ}
{\rm (\cite[Theorem 4.4]{RZ2008})}
There is an invertible sheaf $\sL$ on $X$ so that there is a homomorphism $\phi:  A \to B(X, \sL, \sigma)$ of graded rings, where $B(X, \sL, \sigma)$ is the twisted homogeneous coordinate ring defined in \cite{AV}.  If $A$ is noetherian then $\phi$ is surjective in large degree.

The kernel of $\phi$ is equal in large degree to 
	\[ J = \bigcap \{ \Ann_A(M) \st M \mbox{ is a $C$-point module for some commutative $\kk$-algebra $C$ } \}.\]
\end{theorem}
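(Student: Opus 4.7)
The plan is to construct a universal point module as a coherent sheaf of graded $A$-modules on $X$, and then read off $\phi$ from its module structure. To begin, I would use the hypothesis that $X$ is a fine moduli space for point modules (passing to a suitable cover if necessary) to produce a sheaf $\sM = \bigoplus_{n \geq 0} \sM_n$ on $X$ whose fibre at $x \in X$ is $M^x$. Each $\sM_n$ is an invertible sheaf because the fibres $M^x_n$ are one-dimensional, and we may normalise so that $\sM_0 = \sO_X$. Set $\sL := \sM_1$.

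Next, I would translate the defining identification $M^x[1]_{\geq 0} \cong M^{\sigma(x)}$ into an isomorphism of sheaves. By an inductive unwinding this yields
\[
\sM_n \;\cong\; \sL \otimes \sigma^*\sL \otimes \cdots \otimes (\sigma^{n-1})^*\sL \;=:\; \sL_n,
\]
which is exactly the $n$-th Artin--Van den Bergh twist. The multiplication map $\sM_0 \otimes_\kk A_n \to \sM_n$ is $\sO_X$-linear in the first factor, so it produces $\kk$-linear maps $A_n \to H^0(X, \sL_n) = B(X,\sL,\sigma)_n$ for every $n$. Associativity of the $A$-action forces compatibility with the product on $B(X,\sL,\sigma)$, so the pieces glue to a graded ring homomorphism $\phi : A \to B(X,\sL,\sigma)$.

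For the kernel, I would argue that $\sM$ represents the $C$-point-module functor: any $C$-point module is the pullback of $\sM$ along some morphism $\Spec C \to X$. Then $a \in \ker \phi$ in large degree is equivalent to $a$ acting as zero on $\sM$, hence to $a$ annihilating every $C$-point module, which gives the stated equality $\ker \phi = J$ in large degree.

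The main obstacle is the surjectivity claim when $A$ is noetherian. This amounts to establishing that $\sL$ is $\sigma$-ample in the sense of \cite{AV}, after which the structure of twisted homogeneous coordinate rings makes surjectivity in large degree a standard consequence. I would exploit the noetherianity of $A$ through finite generation of the image of $\phi$ and through the behaviour of point modules under graded quotients of $A$ (whose parameter spaces are closed subschemes of $X$) to extract the cohomological vanishing underpinning $\sigma$-ampleness; comparing Hilbert functions of $A$ and $B(X,\sL,\sigma)$ then closes the argument. This is the delicate core of the Rogalski--Zhang approach and is where I would expect to have to work the hardest.
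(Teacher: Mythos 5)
First, a point of reference: the paper does not prove this statement at all --- it is quoted as \cite[Theorem 4.4]{RZ2008} and serves only as motivation for Theorems \ref{thm:right} and \ref{thm:left} --- so the only meaningful comparison is with Rogalski--Zhang's original argument. Your construction of $\phi$ (a universal point module $\sM=\bigoplus_n\sM_n$ with invertible graded pieces, the identification $\sM_n\cong\sL\otimes\sigma^*\sL\otimes\cdots\otimes(\sigma^{n-1})^*\sL$ coming from the shift functor, and the multiplication maps $A_n\to H^0(X,\sL_n)$) is exactly the Artin--Tate--Van den Bergh/Rogalski--Zhang route, and it is the sheaf-theoretic counterpart of what the paper does affinely in Theorem \ref{thm:right}, where $\kk[X]\rtimes\Gamma$ plays the role of the universal module (Remark \ref{rem:right}). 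The kernel description via representability over arbitrary commutative $C$ is likewise the intended argument, with the caveat that you simply posit that $X$ is a fine moduli space; in \cite{RZ2008} the hypothesis is representability of the \emph{truncated} point functors with transition maps that are eventually isomorphisms, and producing the universal module you start from is part of the work.

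The genuine gap is the surjectivity claim, and not only because you defer it: the reduction you propose is not valid. $\sigma$-ampleness of $\sL$ controls the ring $B(X,\sL,\sigma)$ itself (its noetherianity and the equivalence of $\Qgr B$ with coherent sheaves on $X$), not the image of a given graded map into it. Even when $\sL$ is $\sigma$-ample, the subring of $B(X,\sL,\sigma)$ generated by a proper subspace of $B_1$ can fail to equal $B$ in large degree, so you would still have to show directly that $A_n\to H^0(X,\sL_n)$ is onto for $n\gg 0$; and conversely, Rogalski--Zhang obtain surjectivity in large degree without establishing $\sigma$-ampleness. Their actual mechanism exploits noetherianity through the truncated point schemes and the eventual bijectivity of the truncation maps, none of which appears in your sketch. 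As it stands, the proposal is a correct outline of the construction of $\phi$ and of the kernel computation, but the surjectivity statement --- the part of the theorem that genuinely uses the noetherian hypothesis --- remains unproved, and the route you indicate for it would need to be replaced.
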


The purpose of this section is to give a  version of this theorem for a (not necessarily connected graded) algebra graded by an arbitrary monoid $\Gamma$.

We first need some notation.  Let $\Gamma$ be a monoid and let $A$ be a $\Gamma$-graded ring.  If $M$ is a $\Gamma$-graded right $A$-module and $\gamma \in \Gamma$, we define the {\em shift} $M(\gamma)$ of $M$ by $\gamma$ as:
\[ M(\gamma) = \bigoplus_{\delta \in \Gamma} M(\gamma)_{\delta},\]
where $M(\gamma)_{\delta} = M_{\gamma \delta}$.
We note that 
\beq \label{foo} M(\gamma)_{\delta} A_{\epsilon} = M_{\gamma\delta} A_{\epsilon} \subseteq M_{\gamma \delta \epsilon} = M(\gamma)_{\delta \epsilon}, \eeq
so $M(\gamma)$ is again a $\Gamma$-graded right $A$-module.
Note that 
\[ (M(\gamma))(\delta)_{\epsilon} = M(\gamma)_{\delta \epsilon} = M_{\gamma \delta \epsilon} = M(\gamma \delta)_{\epsilon}\]
and so $(M(\gamma))(\delta) $ is canonically isomorphic to $ M(\gamma \delta)$.

If $M$ is a left module we define $M(\gamma)_\delta = M_{\delta \gamma}$.  Then \eqref{foo} becomes:
\[ 
A_{\epsilon} M(\gamma)_{\delta} = A_{\epsilon} M_{\delta \gamma} \subseteq M_{\epsilon \delta \gamma} = M(\gamma)_{\epsilon \delta},\]
as needed.
We have
\[ (M(\gamma))(\delta)_{\epsilon} = M(\gamma)_{\epsilon \delta} = M_{\epsilon \delta \gamma} = M(\delta \gamma)_{\epsilon}\]
so $(M(\gamma))(\delta) $ is canonically isomorphic to $  M(\delta \gamma )$.

If $A$ is a $\Gamma$-graded ring,  an {\em intermediate series} module over $A$ is a $\Gamma$-graded left or right $A$-module $M$ so that $\dim M_\gamma = 1$ for all $\gamma \in \Gamma$.  We will  use a shift-invariant family of intermediate series modules to construct a ring homomorphism  from $A$ to a $\Gamma$-graded ring, giving a version of Theorem~\ref{thm:RZ} in this setting.

Our notation for smash products is that if $\Gamma$ acts on $A$ then $A\rtimes \Gamma = \bigoplus_{\gamma \in \Gamma} A t^\gamma$, where $t^\gamma t^\delta = t^{\gamma \delta}$ and $t^\gamma r = r^{\gamma} t^\gamma$ for all $r \in A$, $\gamma \in \Gamma$.

\begin{theorem}\label{thm:right}
Let $\Gamma$ be a monoid with identity $e$ and let $A$ be a $\Gamma$-graded ring.
Let $X$ be a reduced affine scheme that parameterises a set of intermediate series right $A$-modules, in the sense that for $x \in X$ there is a  module $M^x$ with  basis $\{v^x_\gamma \st \gamma \in \Gamma\}$, and that there is a $\kk$-linear function $\phi:  A \to \kk[X]$ so that
\[  v^x_e r = \phi(r)(x) v^x_{\gamma}\]
for all $\gamma \in \Gamma, r \in A_\gamma$.
Further suppose that shifting defines a group antihomomorphism $\sigma:  \Gamma \to \Aut_\kk (X),\gamma \mapsto \sigma^\gamma$ so that $M^x(\gamma) \cong M^{\sigma^\gamma(x)}$.
Here we require that the isomorphism maps $v^x_{\gamma \delta} \mapsto v^{\sigma^\gamma(x)}_{\delta}$.

In this setting the map
\[ \Phi:  A \to \kk[X] \rtimes \Gamma, \quad r\in A_\gamma \mapsto \phi(r) t^\gamma\]
is a graded homomorphism of  algebras.  Further, 
\[ \ker \Phi = \bigcap_{x \in X}  \Ann_A M^x.\]
\end{theorem}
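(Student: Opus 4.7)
The plan is to verify in turn that $\Phi$ respects the grading, that it is multiplicative, and that its kernel equals the intersection of the module annihilators. The grading claim is immediate from the construction, since $\phi$ is $\kk$-linear and $\Phi$ sends $A_\gamma$ into $\kk[X]t^\gamma$.

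For multiplicativity I would first pin down the right $\Gamma$-action on $\kk[X]$ used to form $\kk[X]\rtimes\Gamma$. The natural choice $f^\gamma:=f\circ\sigma^\gamma$ satisfies $(f^\delta)^\gamma = f\circ\sigma^\delta\circ\sigma^\gamma = f\circ\sigma^{\gamma\delta} = f^{\gamma\delta}$ precisely because $\sigma$ is an antihomomorphism; this is the compatibility needed to make the smash product well-defined under the convention $t^\gamma t^\delta = t^{\gamma\delta}$ and $t^\gamma f = f^\gamma t^\gamma$. With this in place,
\[
\Phi(r)\Phi(s) \;=\; \phi(r)t^\gamma \phi(s) t^\delta \;=\; \phi(r)\,(\phi(s)\circ\sigma^\gamma)\,t^{\gamma\delta},
\]
so multiplicativity of $\Phi$ on homogeneous $r\in A_\gamma$, $s\in A_\delta$ reduces to the pointwise identity $\phi(rs)(x) = \phi(r)(x)\,\phi(s)(\sigma^\gamma(x))$. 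I would establish this by computing $v^x_e(rs)$ in two ways: on one hand it equals $\phi(rs)(x)\,v^x_{\gamma\delta}$ since $rs\in A_{\gamma\delta}$, and on the other hand associativity of the $A$-action gives $(v^x_e r)s = \phi(r)(x)\,v^x_\gamma s$. To evaluate the remaining $v^x_\gamma s$, I transport along the shift isomorphism $M^x(\gamma)\cong M^{\sigma^\gamma(x)}$, which by hypothesis sends $v^x_{\gamma\mu}\mapsto v^{\sigma^\gamma(x)}_\mu$; applying this for $\mu=e$ and $\mu=\delta$ yields $v^x_\gamma s = \phi(s)(\sigma^\gamma(x))\,v^x_{\gamma\delta}$, and comparing coefficients of $v^x_{\gamma\delta}$ gives the claim.

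For the kernel formula I would repeat the transport calculation starting from $v^x_\delta$ rather than $v^x_e$ to obtain $v^x_\delta r = \phi(r)(\sigma^\delta(x))\,v^x_{\delta\gamma}$ for $r\in A_\gamma$. Hence $r$ annihilates $M^x$ iff $\phi(r)$ vanishes on the entire $\sigma$-orbit of $x$; intersecting over $x\in X$ and taking $\delta=e$ already forces $\phi(r)$ to vanish at every point of $X$, so (using that $X$ is reduced) $\phi(r)=0$ in $\kk[X]$, which is exactly the condition $r\in\ker\Phi$. Since $\Phi$ is graded its kernel is a graded ideal, and each $\Ann_A M^x$ is also graded, so it suffices to compare homogeneous components as above. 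The only real subtlety is bookkeeping: ensuring that the antihomomorphism property of $\sigma$ is fed into the correct slot so that the smash-product twist $f\mapsto f^\gamma$ matches the intertwining coming from the shift isomorphism. I expect no further obstacle beyond this convention check; once the conventions are aligned the verification is a single two-step shift-transport calculation.
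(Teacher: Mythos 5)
Your proposal is correct and follows essentially the same route as the paper's proof: the same two-step shift-transport computation establishing $v^x_\gamma s = \phi(s)(\sigma^\gamma(x))\,v^x_{\gamma\delta}$ (the paper's equation for $\phi(rs)=\phi(r)\phi(s)^\gamma$), the same convention check that the antihomomorphism property of $\sigma$ makes $f\mapsto f\circ\sigma^\gamma$ compatible with $t^\gamma t^\delta=t^{\gamma\delta}$, and the same graded-kernel argument reducing to vanishing of $\phi(r)$ on all of $X$ (with reducedness of $X$ used exactly where the paper notes it is needed).
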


\begin{proof}
Let $\Gamma$ act on $\kk[X]$ by $f^\gamma = (\sigma^\gamma)^*(f)$, so $\sigma$ defines a homomorphism from $\Gamma \to \Aut_\kk(\kk[X])$.

Let $r \in A_\gamma, s \in A_\delta$, and let $\alpha:  V^x(\gamma) \to V^{\sigma^{\gamma}(x)}$ be the given isomorphism.  Then:
\[ \alpha(v^x_{\gamma} s) = v_e^{\sigma^{\gamma}(x)}s= \phi(s)(\sigma^{\gamma}(x)) v^{\sigma^{\gamma}(x)}_\delta = \alpha( \phi(s)(\sigma^{\gamma}(x)) v^x_{ \gamma \delta}).\]
So 
\beq \label{one} v^x_{\gamma} s = \phi(s)^{\gamma}(x) v^x_{ \gamma\delta}. \eeq

Now, using \eqref{one}, we obtain:
\[ \phi(rs)(x) v^x_{\gamma \delta} = 
v^x_e rs = \phi(r)(x) v^x_{\gamma} s  = \phi(r)(x)\phi(s)^{\gamma}(x) v^x_{ \gamma\delta}\]
and so
\beq \label{two} \phi(rs) = \phi(r) \phi(s)^{\gamma}. \eeq
Then by \eqref{two} we have
\[ \Phi(rs) = \phi(rs) t^{\gamma \delta} = \phi(r) \phi(s)^{\gamma}t^{\gamma \delta} = \phi(r) t^\gamma \phi(s)t^\delta = \Phi(r) \Phi(s).\]

Since $\Phi$ is graded, $\ker \Phi$ is a graded ideal of $A$.  
If $r \in A$ is homogeneous then 
\[ \Phi(r) = 0 \iff \phi(r) = 0 \iff v^x_e r = 0 \mbox{ for all } x \in X.\]
Let $\gamma \in \Gamma$.  Then 
\[ v^x_e r = 0 \mbox{ for all } x \in X \iff v^{\sigma^\gamma(x)}_e r = 0 \mbox{ for all } x \in X 
\iff v^x_\gamma r = 0 \mbox{ for all } x \in X,\]
using the isomorphism between $M^x(\gamma)$ and $M^{\sigma^\gamma(x)}$.
So 
\[ \Phi(r)  =0 \iff v^x_\gamma r = 0 \mbox{ for all } x \in X, \gamma \in \Gamma \iff
r  \in \bigcap_{x \in X} \Ann_A M^x.\]

\end{proof}

(The reason we require $X$ in the theorem statement to be reduced is that we are constructing $\Phi$ from the closed points of $X$, and so effectively from the reduced induced structure on $X$.)

\begin{remark}\label{rem:antihom} We need the map $\sigma$ in Theorem~\ref{thm:right} to be an antihomomorphism because of the equations:
\[ M^{\sigma^{\gamma \delta}(x) } \cong M^x(\gamma \delta) = (M^x(\gamma))(\delta) \cong  M^{\sigma^\gamma(x)}(\delta) \cong  M^{\sigma^\delta(\sigma^{\gamma}(x))}.\]
\end{remark}

\begin{remark}\label{rem:right}
There is a universal module $M$ for the family $\{M^x \ | \ x \in X \}$, which is isomorphic as a $\kk[X]$-module to $\bigoplus_{\gamma \in \Gamma} \kk[X] v_\gamma$.  
The module structure is given by
\beq\label{bar} v_\gamma s = \phi(s)^\gamma v_{\gamma \delta} \eeq
for $s \in A_\delta$.
If we consider the natural right action of $A$ on $M = \kk[X] \rtimes \Gamma$ then we have
$t^\gamma \cdot s = t^\gamma \Phi(s) = t^\gamma \phi(s) t^\delta = \phi(s)^\gamma t^{\gamma \delta}$
for $s \in A_\delta$.
This agrees with \eqref{bar} if we set $v_\gamma = t^\gamma$, and so $M \cong \kk[X] \rtimes \Gamma$.
\end{remark}

The theorem for left modules is:
\begin{theorem}\label{thm:left}
Let $\Gamma$ be a monoid with identity $e$ and let $A$ be a $\Gamma$-graded ring.
Let $X$ be a reduced affine scheme that parameterises a set of intermediate series left $A$-modules, in the sense that the left module $N^x$ has a basis $\{v^x_\gamma \st \gamma \in \Gamma\}$ and  that there is a $\kk$-linear function $\phi:  A \to \kk[X]$ so that
\[ r v^x_e = \phi(r)(x) v^x_{\gamma}\]
for all $\gamma \in \Gamma, r \in A_\gamma$.
Further suppose that shifting defines a group homomorphism $\sigma:  \Gamma \to \Aut_\kk (X),\gamma \mapsto \sigma^\gamma$ so that $N^x(\gamma) \cong N^{\sigma^\gamma(x)}$.
Here we require that the isomorphism maps $v^x_{\delta \gamma} \mapsto v^{\sigma^\gamma(x)}_{\delta}$.

In this setting the map
\[ \Phi:  A \to \kk[X] \rtimes \Gamma^{op} \quad r\in A_\gamma \mapsto \phi(r) t^\gamma\]
is a graded antihomomorphism of algebras.  Further, 
\[ \ker \Phi = \bigcap_{x \in X}  \Ann_A N^x.\]
\end{theorem}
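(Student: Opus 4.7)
The plan is to parallel the proof of Theorem~\ref{thm:right}, watching carefully for the order reversals that arise on the left-module side and which ultimately force the target to be $\kk[X]\rtimes \Gamma^{op}$ rather than $\kk[X]\rtimes\Gamma$. The first step is to convert the hypothesis on the isomorphism $\alpha\colon N^x(\gamma)\to N^{\sigma^\gamma(x)}$, which sends $v^x_{\delta\gamma}\mapsto v^{\sigma^\gamma(x)}_\delta$, into a formula for the action of a homogeneous element on an arbitrary basis vector. Applying $\alpha$ to $r v^x_\gamma$ for $r\in A_\epsilon$, and using the given formula $r v^{\sigma^\gamma(x)}_e = \phi(r)(\sigma^\gamma(x))\, v^{\sigma^\gamma(x)}_\epsilon$ together with $\alpha^{-1}(v^{\sigma^\gamma(x)}_\epsilon)=v^x_{\epsilon\gamma}$, I would obtain
\[ r v^x_\gamma = \phi(r)^\gamma(x)\, v^x_{\epsilon\gamma}, \]
where $f^\gamma:=(\sigma^\gamma)^*(f)$ is the pullback.

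Next I would extract the key multiplicative identity for $\phi$. For $r\in A_\epsilon$ and $s\in A_\gamma$, computing $(rs)v^x_e$ in two ways—directly via the definition of $\phi(rs)$, and by associativity as $r(sv^x_e)=\phi(s)(x)\, rv^x_\gamma$, to which the formula just derived applies—yields
\[ \phi(rs)=\phi(s)\,\phi(r)^\gamma. \]
The reversal of $r$ and $s$ is precisely what makes $\Phi$ an antihomomorphism. It also explains why the smash product must be taken over $\Gamma^{op}$: because $\sigma\colon\Gamma\to \Aut_\kk X$ is a homomorphism and pullback of functions is contravariant, the rule $\gamma\mapsto(\sigma^\gamma)^*$ gives an antihomomorphism $\Gamma\to \Aut_\kk(\kk[X])$, equivalently a genuine left action of $\Gamma^{op}$. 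Thus $\kk[X]\rtimes \Gamma^{op}$ is the correct smash product, with relations $t^\gamma t^\delta=t^{\delta\gamma}$ and $t^\gamma f = f^\gamma t^\gamma$.

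With these identities in hand, the antihomomorphism statement reduces to a direct calculation: expanding $\Phi(s)\Phi(r)=\phi(s)t^\gamma\phi(r)t^\epsilon$, pushing $\phi(r)$ past $t^\gamma$, and multiplying the $t$-factors gives $\phi(s)\phi(r)^\gamma t^{\epsilon\gamma}=\phi(rs)t^{\epsilon\gamma}=\Phi(rs)$. The kernel identification is then taken verbatim from the right-module case: $\Phi$ is graded, hence $\ker\Phi$ is graded, and for a homogeneous element $r\in A_\epsilon$ the equivalences $\Phi(r)=0 \iff \phi(r)=0 \iff r v^x_e=0$ for all $x \iff r v^x_\gamma=0$ for all $x$ and $\gamma$ (using the shift isomorphisms exactly as in Theorem~\ref{thm:right}) produce $\ker\Phi=\bigcap_{x\in X}\Ann_A N^x$. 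I expect no genuine mathematical obstruction; the only real risk is a variance slip, and the main discipline is to commit at the outset to treating $f\mapsto f^\gamma$ as a $\Gamma^{op}$-action, after which both the key identity for $\phi$ and the antihomomorphism check become forced computations.
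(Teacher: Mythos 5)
Your proof is correct and follows essentially the same route as the paper's: derive the action formula $rv^x_\gamma=\phi(r)^\gamma(x)\,v^x_{\epsilon\gamma}$ from the shift isomorphism, deduce $\phi(rs)=\phi(s)\,\phi(r)^\gamma$ by computing $(rs)v^x_e$ two ways, check the antihomomorphism identity in $\kk[X]\rtimes\Gamma^{\op}$, and transfer the kernel argument verbatim from the right-module case. The only stylistic difference is that you explicitly flag the contravariance of pullback as the reason $\gamma\mapsto(\sigma^\gamma)^*$ lands in a $\Gamma^{\op}$-action, which the paper states without comment; this is a helpful remark but not a different argument.
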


\begin{proof}
We repeat the proof above to ensure that the change of notation from right to left is handled correctly.  
Again, let $f^\gamma = (\sigma^\gamma)^* f$, so $\sigma$ defines a homomorphism from $\Gamma^{op} \to \Aut_\kk \kk[X]$.
Let $r \in A_\gamma, s \in A_\delta$, and let $\alpha:  V^x(\delta) \to V^{\sigma^{\delta}(x)}$ be the given isomorphism. 
Then:
\[ \alpha(r v^x_\delta) = r v^{\sigma^\delta(x)}_e = \phi(r)(\sigma^\delta(x)) v^{\sigma^\delta(x)}_{\gamma} = \alpha(\phi(r)(\sigma^\delta(x)) v^x_{\gamma \delta}).\]
So
\beq \label{three}
r v^x_\delta = \phi(r)(\sigma^\delta(x)) v^x_{\gamma \delta}.
\eeq

Now, using \eqref{three}, we obtain:
\[ \phi(rs)(x) v^x_{\gamma \delta} = rs v^x_e = \phi(s)(x) r v^x_{\delta} = \phi(s)(x) \phi(r)(\sigma^\delta(x)) v^x_{\gamma \delta}\]
and so
\beq \label{four} \phi(rs) = \phi(s) \phi(r)^{\delta}. \eeq
Then by \eqref{four} we have
\[ \Phi(rs) = \phi(s) \phi(r)^{\delta}t^{ \gamma \delta} = \phi(s) \phi(r)^{\delta}  t^{\delta \circ_{\op} \gamma }= \phi(s) t^\delta \phi(r) t^\gamma = \Phi(s) \Phi(r).\]

The proof of the last statement is identical to the proof in Theorem~\ref{thm:right}.
\end{proof}

\begin{remark}\label{rem:hom}
We need the map $\sigma$ in Theorem~\ref{thm:left} to be a homomorphism because:
\[ N^{\sigma^{\gamma \delta}(x) } = N^x(\gamma \delta) = (N^x(\delta))(\gamma) = N^{\sigma^\delta(x)}(\gamma) = N^{\sigma^\gamma(\sigma^{\delta}(x))}.\]
Note also that  a graded anti-homomorphism from a $\Gamma$-graded algebra should map to a $\Gamma^{\op}$-graded algebra, as we indeed have.
\end{remark}

\begin{remark}\label{rem:left}
We likewise obtain the universal left module for the $N^x$ from $\Phi$.
Set $N = \kk[X] \rtimes \Gamma^{op}$. 
The left action induced by $\Phi$ is $r \cdot \delta = \delta \Phi(r)$ because $\Phi$ is an anti-homomorphism, so we get
\[ r \cdot t^\delta = t^\delta \Phi(r) = t^\delta \phi(r) t^\gamma = \phi(r)^{\delta} t^{\delta \circ_{op} \gamma }= \phi(r)^\delta t^{\gamma \delta}\]
for $r \in A_{\gamma}$, which is the structure we expect.
\end{remark}

\begin{remark}
Let $\Bir(X)$ be the group of birational self-maps of $X$.  In the settings above, suppose that shifting defines elements of $\Bir(X)$, in the sense that $\sigma$ maps $\Gamma$ to $\Bir(X)$. We get a generalization of Theorems \ref{thm:right} and \ref{thm:left} by replacing $\kk[X]$ and $\Aut(\kk[X])$ with $\kk(X)$ and $\Bir(X)$, respectively. 
\end{remark}

\section{Intermediate series modules over higher rank Witt algebras}\label{sec2}
Let $\Gamma$ be a rank $n$ $\ZZ$-submodule of $\fk$. 
The {\em rank $n$ Witt algebra} $W_\Gamma$ (or {\em higher rank Witt algebra} if $n\geq 2$, sometimes called the centerless higher rank Virasoro algebra) is the  Lie algebra with $\kk$-basis 
$\{e_\nu\mid\nu\in \Gamma\}$ and bracket
\[
[e_\mu,e_\nu]=(\nu-\mu)e_{\nu+\mu}
\]
for $\nu,\mu\in \Gamma$. The rank one Witt algebra is the ``usual'' Witt algebra, which we denote by $W$.

As $U(W_\Gamma)$ is $\Gamma$-graded one can consider intermediate series modules as in Section \ref{sec1}. They are the standard intermediate series modules of Lie algebras, called also Harish-Chandra modules over $(W_\Gamma,\fk e_0)$; i.e., modules of the form $\oplus_{\gamma\in \Gamma} V_{\gamma}$, where $V_{\gamma}$ is the $\gamma$-eigenspace for $ e_0$ and has dimension $1$.

The intermediate series $W_\Gamma$-modules have been classified in \cite[Theorem 2.1]{SuVirasoro}, generalizing the classification \cite{KaplanskySantharoubane} for the Witt algebra. 
There are three families of 
indecomposable intermediate series $W_\Gamma$-modules:
\begin{align*}
V_{(\alpha,\beta)}&=
\oplus_{\nu\in \Gamma} \fk v_\nu,\quad e_\mu v_\nu=(\alpha+\beta \mu+\nu)v_{\mu+\nu},\\
A_{(\alpha,\beta)}&=
\oplus_{\nu\in \Gamma}\fk v_\nu,\quad e_\mu v_\nu=\begin{cases}
\nu v_{\mu+\nu}&  \nu\neq 0,\,\mu+\nu\neq 0,\\ (\alpha+\beta\mu)v_{\mu}&\nu=0,\\ 0&\mu+\nu=0,
\end{cases}\\
B_{(\alpha,\beta)}&=
\oplus_{\nu\in \Gamma}\fk v_\nu,\quad e_\mu v_\nu=\begin{cases}
(\mu+\nu) v_{\mu+\nu}&  \nu\neq 0,\,\mu+\nu \neq 0,\\0&\nu=0,\\ (\alpha+\beta\mu)v_{0}&\mu+\nu=0,
\end{cases}\\
\end{align*}
where $(\alpha,\beta)\in \AA^2$. 
Note that $A_{(\alpha,\beta)}$, $B_{(\alpha,\beta)}$ are only defined where $(\alpha, \beta) \neq (0,0)$ and depend up to isomorphism (rescaling of $v_0$) only on $[\alpha:\beta]\in \PP^1$. We will therefore denote them by $A_{[\alpha:\beta]}$, $B_{[\alpha:\beta]}$.
Note also that we have $A_{[1:0]} \cong V_{(0,1)}$ (by $v_0\mapsto v_0$ and $v_\nu\mapsto \nu v_\nu$ when $\nu\neq 0$) and $B_{[1:0]} \cong V_{(0,0)}$ (by $v_0\mapsto \nu v_0$ and $v_\nu\mapsto v_\nu$ when $\nu\neq 0$). 

\begin{remark}
Note that $A_{[\alpha:\beta]}$ contains a simple submodule $\oplus_{0\neq \nu\in\Gamma}\fk v_{\nu}$ with a $1$-dimensional trivial quotient.  
On the other hand, $B_{[\alpha:\beta]}$ has the $1$-dimensional trivial submodule $\fk v_\nu$, and the quotient is a simple module. 
This is explained by the isomorphism $B_{[\alpha:\beta]}'\cong A_{[\alpha:\beta]}$, where $'$ denotes the adjoint.  
(If $M=\oplus_{\gamma\in\Gamma}\fk v_\gamma$ is a left $\Gamma$-graded $W_\Gamma$-module, the {\em adjoint} (or {\em restricted dual})   of  $M$ is the left $\Gamma$-graded $W_\Gamma$-module $M'$ with $M'_\gamma=\Hom_\kk(M_{-\gamma},\kk)$, $v'_\gamma=v^*_{-\gamma}$, and $e_\mu v'_\gamma=-v^*_{-\gamma}e_{\mu}$.) 
\end{remark}

\begin{remark}
We use a slightly different presentation of the families $A_{[\alpha:\beta]}$, $B_{[\alpha:\beta]}$ than in \cite{SuVirasoro}. In loc.cit the last two families are replaced by 
$\tilde{A}({a'})$ defined by
\[
e_\mu v'_\nu=(\nu+\mu)v'_{\mu+\nu},\; \nu\neq 0,\quad e_\mu v_0=\mu(1+(\mu+1)a')v'_\mu,
\]
and  by $\tilde{B}(a')$ defined by
\[
e_\mu v'_\nu=\nu v'_{\mu+\nu},\; \nu\neq -\mu,\quad e_\mu v'_{-\mu}=-\mu(1+(\mu+1)a')v'_0,
\]
for $a'\in \fk\cup\{\infty\}$. If $a'=\infty$ then $1+(\mu+1)a'$ in the above definition is regarded as $\mu+1$. 
Note that $\tilde{A}(a')$ (resp. $\tilde{B}(a')$) is isomorphic to $A_{[1+a':a']}$ (resp. $B_{[1+a':a']}$) if $a'\neq \infty$ and to $A_{[1:1]}$ (resp. $B_{[1:1]}$) if $a'=\infty$, for $v_\nu=\nu v'_\nu$ (resp. $v_\nu=\frac{1}{\nu} v'_\nu$) if $\nu\neq 0$, and $v_0=v'_0$.

For the Witt algebra the choice of the basis is the same in \cite{KaplanskySantharoubane}, however there $a'\in \fk$ and modules are classified up to inversion:   replacing $v_{\nu}$ by $-v_{-\nu}$.

\end{remark}

Let us show how to obtain the intermediate series modules using results of Section \ref{sec1}. 

\begin{proposition}\label{prop:definemap}
Let $\Gamma$ act on $\fk[a,b]$ as $t^\nu. p(a,b)=p(a+\nu,b)t^\nu$, and let $T := \kk[a,b] \rtimes \Gamma$. 
The map $\phi:W_\Gamma\to T$, $\phi(e_\mu)=(a+b\mu)t^\mu$, induces an anti-homomorphism $\Phi:U(W_\Gamma)\to T$. Consequently, $T$ is a left $U(W)$-module via 
$e_\mu.p(a,b)t^\nu=(a+\nu +b\mu) p(a,b) t^{\mu+\nu}$.
\end{proposition}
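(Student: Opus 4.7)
The plan is to apply Theorem~\ref{thm:left} directly to the family $\{V_{(\alpha,\beta)}\}_{(\alpha,\beta)\in\AA^2}$ of left intermediate series $U(W_\Gamma)$-modules, taking $X = \AA^2$ with coordinate ring $\kk[a,b]$. For $x = (\alpha,\beta)$ set $N^x = V_{(\alpha,\beta)}$ with the basis $\{v_\nu\}$ from the classification. The defining formula $e_\mu v_0 = (\alpha + \beta\mu)v_\mu$ shows that on generators the $\kk$-linear map $\phi\colon W_\Gamma \to \kk[a,b]$ with $\phi(e_\mu) = a + b\mu$ satisfies the hypothesis $e_\mu v^x_0 = \phi(e_\mu)(x)v^x_\mu$; it extends uniquely to $U(W_\Gamma)$ by $\phi(r)(x) := c_\gamma$ whenever $r v^x_0 = c_\gamma v^x_\gamma$ for $r \in U(W_\Gamma)_\gamma$, and the scalar $c_\gamma$ is polynomial in $(\alpha,\beta)$ since it is built from the linear action of the generators.

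Next, one verifies the shift-invariance required by Theorem~\ref{thm:left}. Writing $w_\delta := v_{\delta+\nu}$, so that $w_\delta$ spans $V_{(\alpha,\beta)}(\nu)_\delta$, a direct calculation gives
\begin{equation*}
e_\mu \cdot w_\delta = (\alpha + \beta\mu + \delta + \nu)\,v_{\mu+\delta+\nu} = \bigl((\alpha+\nu) + \beta\mu + \delta\bigr) w_{\mu+\delta},
\end{equation*}
so $V_{(\alpha,\beta)}(\nu) \cong V_{(\alpha+\nu,\beta)}$ via $v_{\delta+\nu}\mapsto v_\delta$, which is the basis identification required. Thus $\sigma^\nu(\alpha,\beta) = (\alpha+\nu,\beta)$ defines a group homomorphism $\sigma\colon\Gamma \to \Aut_\kk(\AA^2)$, and the induced action on $\kk[a,b]$ is $f^\nu(a,b) = f(a+\nu,b)$, which is precisely the $\Gamma$-action used to build $T$.

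Since $\Gamma$ is abelian, $\Gamma^{\op} = \Gamma$ and the target $\kk[a,b] \rtimes \Gamma^{\op}$ of Theorem~\ref{thm:left} equals $T$. The theorem then produces an anti-homomorphism $\Phi\colon U(W_\Gamma)\to T$ with $\Phi(e_\mu) = (a+b\mu)t^\mu$, exactly as stated. The final sentence of the proposition is Remark~\ref{rem:left}: the universal left module is $T$ with action $r \cdot pt^\delta = pt^\delta\Phi(r)$; specialising to $r = e_\mu$ gives $e_\mu \cdot p(a,b)t^\nu = p(a,b)t^\nu(a+b\mu)t^\mu = (a+\nu+b\mu)\,p(a,b)\,t^{\mu+\nu}$, which is the displayed formula. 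I expect no real obstacle: the proof is essentially bookkeeping for Theorem~\ref{thm:left}. The only care needed is to translate between the additive notation on $\Gamma$ and the multiplicative convention of the theorem, and to notice that a Lie anti-map into $T$ extends to a ring anti-homomorphism on the enveloping algebra, which is why $\Phi$ is an anti-homomorphism rather than a homomorphism.
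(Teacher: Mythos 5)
Your proof follows the same route as the paper's: identify $\AA^2$ as the parameter space for the $V_{(\alpha,\beta)}$, check the shift-compatibility $V_{(\alpha,\beta)}(\nu) \cong V_{(\alpha+\nu,\beta)}$, and then invoke Theorem~\ref{thm:left} and Remark~\ref{rem:left}. The only difference is that you spell out the shift calculation and the $\Gamma = \Gamma^{\op}$ point in more detail than the paper does, which is fine.
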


\begin{proof}
Note that $\mathbb{A}^2$ parametrises a set 
of intermediate series modules 
$N^{(\alpha,\beta)}:=V_{(\alpha,\beta)}$ and 
$e_{\mu}v_0^{(\alpha,\beta)}=(a+b\mu)((\alpha,\beta)) v_{\mu}^{(\alpha,\beta)}$. Further, $N^{(\alpha,\beta)}(\nu)\cong N^{(\alpha+\nu,\beta)}$  and hence $\sigma^\nu((\alpha,\beta))=(\alpha+\nu,\beta)$ (using the notation of Section \ref{sec1}). 
The proposition therefore follows by Theorem \ref{thm:left} and Remark~\ref{rem:left}. 
\end{proof}

\begin{remark}\label{rem:samemap}
Let $\Gamma= \ZZ$ and $T = \kk[a,b] \rtimes \ZZ$.
We may compose the map $\Phi$ of Proposition~\ref{prop:definemap} with the canonical anti-automorphism $e_n \mapsto -e_{n}$ of $U(W)$ to obtain a homomorphism $\Phi':  U(W) \to T, e_n \mapsto (-a-bn) t^n$.

Recall that in \cite{SierraWalton2} a homomorphism $\hat{\phi}$ was constructed from $U(W)$ to
\[ T' := \kk\ang{u,v,v^{-1},w}/(uv-vu-v^2, uw-wu-wv, vw-wv),\]
defined by $\hat{\phi}(e_n) = (u-(n-1)w)v^{n-1}$.
The reader may verify that $\alpha: T' \to T$ defined by
\[ u \mapsto (b-a)t, \quad v \mapsto t, \quad w \mapsto bt\]
is an isomorphism of graded rings and that $\alpha \hat{ \phi} = \Phi'$.
Thus Proposition~\ref{prop:definemap} generalises the construction of $\hat{\phi}$.
\end{remark}

We now discuss applications of $\Phi$ to the representation theory of $W_\Gamma$.  
For $p=(\alpha,\beta)\in \AA^2$ we denote by $I(p) $ the ideal $(a-\alpha,b-\beta)$ in $\fk[a,b]$. 
For $q$ infinitely near to $p$, corresponding to $[x:y]\in \PP^1$, we denote by $I(q)$ the ideal $(y(a-\alpha)-x(b-\beta),(a-\alpha)^2,(a-\alpha)(b-\beta),(b-\beta)^2)$. 

Let $B = \Phi(U(W_\Gamma))$, and note that $B $ is contained in the double idealizer $R=\kk[a,b] + ( I(0,0) T \cap TI(0,1))$.  
From the discussion in the introduction, then, we expect three families of intermediate series $U(W_\Gamma)$-modules, one parameterised by
$\AA^2 \ssm\{(0,0), (0,1)\}$ and two parameterised by $\PP^1$. 
Note that because $\Phi$ is an anti-homomorphism, {\em right} $B$-modules will correspond to {\em left} $U(W_\Gamma)$-modules.

By construction of $\Phi$ we have $V(\alpha,\beta)\cong T/ I(p)T$, considered as a $B$-module.  
Removing $V(0,0)$ and $V(0,1)$ we obtain the two-dimensional family we expect.  
We next show that we also obtain the two $\PP^1$-families $A_{[\alpha:\beta]}$ and $B_{[\alpha:\beta]}$.

\begin{proposition}\label{prop:P}
Let $[x:y] \in \PP^1$ and let $I(q) = (ya-xb,a^2, ab, b^2)$ define a point infinitely near to $(0,0)$.
Let 
\[ P(q) = \frac{\fk[a,b] + I{(0,0)}T}{I{(0,0)} + I(q)T}.\]
Then
$A_{[x:y]}\cong P(q)$. 
 \end{proposition}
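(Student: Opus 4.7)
The plan is to exhibit an explicit basis $\{v_\nu\}$ of $P(q)$ so that the left $W_\Gamma$-action induced by $\Phi$ matches the defining formulas of $A_{[x:y]}$.

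First I would check that $P(q)$ is in fact an intermediate series module. The degree-zero component of the quotient is $\fk[a,b]/I(p_0) \cong \fk$, and for $\nu \neq 0$ the degree-$\nu$ component is $I(p_0)t^\nu/I(q)t^\nu \cong I(p_0)/I(q)$, which is one-dimensional because $I(q) \supseteq I(p_0)^2$ and $I(q)$ is cut out in $I(p_0)$ by the single linear form $ya - xb$. Next, assuming for concreteness that $y \neq 0$ so that $\overline{b}$ spans $I(p_0)/I(q)$ and $\overline{a} = (x/y)\overline{b}$ there, I set
\[ v_0 = \overline{1}, \qquad v_\nu = \tfrac{1}{y}\,\overline{b\, t^\nu} \quad (\nu \neq 0). \]
The case $y = 0$ (i.e. $[x:y] = [1{:}0]$) is handled symmetrically with $\overline{a\, t^\nu}$ in place of $\tfrac{1}{y}\overline{b\, t^\nu}$.

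Since $\Phi$ is an anti-homomorphism, the left $W_\Gamma$-action reads $e_\mu\cdot m = m\,\Phi(e_\mu) = m(a+b\mu)t^\mu$, so on the basis above
\[ e_\mu\cdot v_0 = \overline{(a+b\mu)t^\mu}, \qquad e_\mu\cdot v_\nu = \tfrac{1}{y}\,\overline{b(a+\nu+b\mu)\, t^{\mu+\nu}}, \]
using the commutation rule $t^\nu f(a,b) = f(a+\nu,b)t^\nu$. A four-way case analysis then matches the formulas for $A_{[x:y]}$: if $\mu = 0 = \nu$, then $\overline{a}$ vanishes modulo $I(p_0)$; if $\nu = 0$ and $\mu \neq 0$, the reduction $a \equiv (x/y)b \pmod{I(q)}$ turns the first expression into $\tfrac{x+y\mu}{y}\overline{b\, t^\mu} = (x+y\mu)\,v_\mu$; if $\nu \neq 0$ and $\mu + \nu \neq 0$, the terms $ab$ and $b^2$ are killed by $I(q)$, so the second expression reduces to $\nu\, v_{\mu+\nu}$; and if $\mu + \nu = 0$ with $\nu \neq 0$, the second expression lies in $I(p_0)$ in degree zero and vanishes.

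The main bookkeeping point is the normalisation of $v_\nu$: the prefactor $1/y$ is chosen precisely so that the relation $ya \equiv xb \pmod{I(q)}$ produces the coefficient $x + y\mu$ demanded by the second line of the formula for $A_{[x:y]}$. Once this normalisation is fixed, all remaining identities follow automatically, because the quadratic generators $a^2, ab, b^2$ of $I(q)$ kill every other product that appears in the cases with $\nu \neq 0$. The map $v_\nu \mapsto v_\nu$ is then manifestly a $W_\Gamma$-equivariant bijection between the chosen bases, hence an isomorphism $A_{[x:y]} \cong P(q)$.
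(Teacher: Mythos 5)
Your proof is correct and follows essentially the same route as the paper's: pick an explicit graded basis of $P(q)$ ($\bar 1$ in degree zero and a generator of $I(0,0)/I(q)$ in each nonzero degree) and verify the action formulas induced by the anti-homomorphism case by case. The only differences are cosmetic --- you normalise by $1/y$ so as to land exactly on the representative $(x,y)$ of $[x:y]$ and split into the cases $y\neq 0$ versus $y=0$, whereas the paper uses unnormalised bases and the cover of $\PP^1$ by $x\neq 0$ and $y\neq 0$.
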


\begin{proof}
If $w \in \fk[a,b] + I{(0,0)}T$ let $\ol{w}$ be the  image of $w$ in $P(q)$. 
If $x\neq 0$ we choose a basis 
\[
v_\nu=\begin{cases}
\ol{at^{\nu}}&{\nu\neq 0},\\
 \bar 1&{\nu=0}
 \end{cases}
\]
for $P(q)$.

Using the anti-homomorphism, we compute for $\nu\neq 0$
\[
e_\mu.v_\nu=\ol{at^\nu(a+b\mu)t^\mu}=\ol{a(a+b\mu+\nu)t^{\mu+\nu}}=\nu \ol{at^{\mu+\nu}}=
\begin{cases}
\nu v_{\nu+\mu} & \nu+\mu\neq 0,\\
0&\nu+\mu=0.
\end{cases}
\]
and 
\[
e_\mu.v_0=\ol{(a+b\mu)t^\mu}=\ol{\left(a+\frac{y}{x}a\mu\right)t^\mu}=\left(1+\frac{y}{x}\mu\right)v_\mu,
\]
so $P(q) \cong A_{[x:y]}$ as claimed.

If $y\neq 0$ we pick a basis 
\[
v_\nu=\begin{cases}
\ol{bt^{\nu}}&{\nu\neq 0},\\
 \ol{1}&{\nu=0},
 \end{cases}
\]
and obtain 
$e_\mu.v_{\nu}=\nu v_{\nu+\mu}$, 
$e_\mu.v_0=(\frac{x}{y}+\mu)v_\mu$,
$e_\mu.v_{-\mu}=0$.
Thus $P(q) \cong A_{[x:y]}$ again.
\end{proof}

In the next result, note the change  of sides from the left modules $Q(q)$ defined in the introduction.

\begin{proposition}\label{prop:Q}
Let $[x:y] \in \PP^1$ and let $I(q) = (ya-x(b-1),a^2, a(b-1), (b-1)^2)$ define a point infinitely near to $(0,1)$.
Let 
\[ Q(q) = \frac{I(0,1) \oplus  \bigoplus_{0\neq\nu\in \Gamma}\fk[a,b]t^\nu}{I{(q)} \oplus \bigoplus_{0\neq \nu\in \Gamma}I{(0,1)}t^\nu}.\]
Then $B_{[x:y]}\cong Q(q)$.
\end{proposition}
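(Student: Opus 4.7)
The plan is to imitate the proof of Proposition~\ref{prop:P}. Since $\Phi$ is an antihomomorphism (Proposition~\ref{prop:definemap}), the right $B$-module $Q(q)$ becomes a left $U(W_\Gamma)$-module via $e_\mu . m = m \cdot (a+b\mu)t^\mu$, computed in $T$.

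First, I would verify that $Q(q)$ is well-defined by checking that the numerator $I(0,1) \oplus \bigoplus_{0 \neq \nu} \fk[a,b] t^\nu$ and the denominator $I(q) \oplus \bigoplus_{0 \neq \nu} I(0,1) t^\nu$ are both closed under right multiplication by $(a+b\mu) t^\mu$. The only non-routine case is when the degrees cancel, i.e., $\nu + \mu = 0$ with $\nu \neq 0$: here $g t^\nu \cdot (a+b\mu) t^\mu = g(a - \nu(b-1))$, which lies in $I(0,1)$ because $a - \nu(b-1) \in I(0,1)$, and moreover in $I(0,1)^2 \subseteq I(q)$ when $g \in I(0,1)$. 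Standard codimension counts (using that $I(q)$ corresponds to an infinitely near point, so $I(0,1)/I(q)$ has dimension one) then show that each graded piece of $Q(q)$ is one-dimensional.

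Next I would choose a basis and compute the action. Assume $x \neq 0$; the case $y \neq 0$ is symmetric with $b-1$ playing the role of $a$. Take $v_\nu = \ol{t^\nu}$ for $\nu \neq 0$ and $v_0 = \ol{a}/x$, where the factor $1/x$ absorbs the projective scaling inherent in $[x:y]$. The calculation then splits into three cases: for $\nu \neq 0$ and $\nu+\mu \neq 0$, $v_\nu \cdot (a+b\mu)t^\mu = (a+\nu+b\mu)t^{\nu+\mu} \equiv (\mu+\nu) v_{\mu+\nu}$ modulo $I(0,1) t^{\mu+\nu}$; for $\nu = 0$, the product $v_0 \cdot (a+b\mu) t^\mu$ lies either in $I(q)$ (when $\mu = 0$) or in $I(0,1) t^\mu$ (when $\mu \neq 0$), so $e_\mu . v_0 = 0$; and for $\nu + \mu = 0$ with $\nu \neq 0$, $v_\nu \cdot (a+b\mu) t^\mu = a + \mu(b-1)$, which reduces modulo $I(q)$ to $(x+y\mu) v_0$ via the relation $b-1 \equiv (y/x) a$. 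These formulas agree with the defining action of $B_{[x:y]}$, so sending $v_\nu$ to the corresponding basis vector of $B_{[x:y]}$ is the desired isomorphism.

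The main technical step is the exceptional case $\nu + \mu = 0$, where one must use the infinitely-near relation $ya - x(b-1) \in I(q)$ to recover precisely the scalar $\alpha + \beta \mu$; the correct normalisation of $v_0$ (dividing by $x$) is crucial for a clean match. Everything else is routine verification.
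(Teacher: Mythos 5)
Your proposal is correct and follows the paper's approach essentially line by line: choose $v_\nu = \ol{t^\nu}$ for $\nu \neq 0$ and a suitable generator of the one-dimensional space $I(0,1)/I(q)$ for $v_0$, then compute the induced left $U(W_\Gamma)$-action via the anti-homomorphism $\Phi$ and match it with the defining formulas for $B_{[x:y]}$. Two small things you do beyond the paper's written proof: you verify explicitly that the numerator and denominator of $Q(q)$ are right $B$-submodules of $T$ (the only nontrivial case being $\mu+\nu=0$, which you handle correctly via $I(0,1)^2 \subseteq I(q)$), and your normalisation $v_0 = \ol{a}/x$ (resp.\ $\ol{b-1}/y$) together with the uniform choice $v_\nu = \ol{t^\nu}$ for $\nu\ne 0$ produces the formulas for $B_{(x,y)}$ on the nose in both charts $x\ne 0$ and $y\ne 0$, which is a bit cleaner than the paper's (in the paper's $y\ne 0$ case the basis is scaled differently and the degree-zero generator is written as $\ol{b}$, which one should read as $\ol{b-1}$ since $b\notin I(0,1)$).
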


\begin{proof}
If $x\neq 0$ we choose a basis 
\[
v_\nu=\begin{cases}
{\ol{t^{\nu}}}&{\nu\neq 0},\\
 \ol{ a}&{\nu=0}
 \end{cases}
\]
for $Q(q)$.
We compute for $\nu+\mu\neq 0$, $\nu\neq 0$
\[
e_\mu.v_\nu=\ol{(a+b\mu+\nu)t^{\mu+\nu}}=(\mu+\nu) \ol{t^{\mu+\nu}}=(\mu+\nu) v_{\mu+\nu}
\]
and 
\[
e_\mu.v_0=\ol{a(a+b\mu)t^\mu}=0,\quad e_\mu.v_{-\mu}=\ol{a+b\mu-\mu}=\left(1+\frac{y}{x}\mu\right)v_0.
\]

If $y\neq 0$ we pick a basis 
\[
v_\nu=\begin{cases}
{\nu \ol{t^{\nu}}}&{\nu\neq 0},\\
 \ol{b}&{\nu=0}.
 \end{cases}
\]
We get 
$e_\mu.v_\nu=\nu v_{\mu+\nu}$, $e_\mu.v_0=0$, $e_\mu.v_{-\mu}=\left(\frac{x}{y}+\mu\right)v_0$.
\end{proof}

\section{Factors of $U(W_\Gamma)$}
In this section we  generalise techniques from \cite{SierraWalton2} to show that $B=\Phi(U(W_\Gamma))$ is not left or right noetherian.
This in particular implies that $U(W_\Gamma)$ is not left or right noetherian, which  was  proved earlier in \cite{SierraWalton1,SierraWalton2}.

For  $0\neq \mu\in \Gamma$, let 
\[p_\mu=e_\mu e_{3\mu}-e_{2\mu}^2-\mu e_{4\mu}.\]

\begin{lemma}\label{lem:pmu}
We have $\Phi(p_\mu)=\mu^2b(1-b)t^{4\mu}$.
\end{lemma}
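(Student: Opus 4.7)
The plan is to compute $\Phi(p_\mu)$ by brute force, expanding each of the three terms and collecting. Since $\Phi$ is the anti-homomorphism from Proposition~\ref{prop:definemap}, products reverse, so
\[
\Phi(e_{i\mu} e_{j\mu}) = \Phi(e_{j\mu})\Phi(e_{i\mu}) = (a + jb\mu)\, t^{j\mu}\, (a + ib\mu)\, t^{i\mu}.
\]
The key algebraic rule is the commutation $t^{j\mu} f(a,b) = f(a + j\mu, b)\, t^{j\mu}$ in $T$, which lets me move the leftmost $t^{j\mu}$ past the polynomial factor. Each of the three terms $e_\mu e_{3\mu}$, $e_{2\mu}^2$, $\mu e_{4\mu}$ then produces a polynomial in $a,b,\mu$ times $t^{4\mu}$.

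After applying the commutation rule I would obtain
\[
\Phi(e_\mu e_{3\mu}) = (a+3b\mu)(a + 3\mu + b\mu)\, t^{4\mu}, \quad
\Phi(e_{2\mu}^2) = (a+2b\mu)(a+2\mu+2b\mu)\, t^{4\mu},
\]
and $\mu\Phi(e_{4\mu}) = \mu(a+4b\mu)\, t^{4\mu}$. Subtracting and expanding, the $a^2$ and the $4ab\mu$ terms cancel between the first two products, leaving $a\mu + 5b\mu^2 - b^2\mu^2$; subtracting the third contribution $a\mu + 4b\mu^2$ leaves $b\mu^2 - b^2\mu^2 = \mu^2 b(1-b)$, giving the claimed $\Phi(p_\mu) = \mu^2 b(1-b)\, t^{4\mu}$.

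There is no real obstacle here: the proof is a direct verification, and the only thing to be careful about is the order reversal from the anti-homomorphism (so that the polynomial coefficient from the \emph{left} factor is the one that gets shifted) and keeping track of the sign/placement of the scalar $\mu$ in front of $e_{4\mu}$. Conceptually, the point of the calculation is that $p_\mu$ was chosen precisely so that the leading $a^2 t^{4\mu}$ terms cancel, and this cancellation is what makes $\Phi(p_\mu)$ land in the ideal $(b(1-b)) = (b) \cap (b-1)$ of $\kk[a,b]$, i.e.\ in the defining ideal of the $\Gamma$-invariant lines through $(0,0)$ and $(0,1)$ that feature in the double idealizer description of $B$.
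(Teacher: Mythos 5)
Your computation is correct and follows the same direct-verification route as the paper: apply the anti-homomorphism to reverse each product, use the commutation $t^{j\mu}f(a,b) = f(a+j\mu,b)t^{j\mu}$, and expand. The intermediate expansion ($a^2$ and $4ab\mu$ cancelling, leaving $a\mu + 5b\mu^2 - b^2\mu^2$ before subtracting $\mu(a+4b\mu)$) checks out and matches what the paper leaves implicit.
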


\begin{proof}
Let us compute 
\begin{align*}
\Phi(e_\mu e_{3\mu}-e_{2\mu}^2-\mu e_{4\mu})&=
\left((a+3\mu b)(a+\mu b+3\mu)-(a+2\mu b)(a+2\mu b+2\mu)-\mu(a+4\mu b)\right)t^{4\mu}\\
&=\mu^2b(1-b)t^{4 \mu}.
\end{align*}
\end{proof}

Fix $0 \neq \mu \in \Gamma$ and let $I = B \Phi(p_\mu) B$. 

\begin{lemma}\label{all}
For all $\nu\in \Gamma$ we have $b(1-b)t^\nu\in I$. 
 In particular, $I$ does not depend on the choice of $\mu$.
 Consequently, $I=b(1-b)\fk[a,b]\rtimes \Gamma$.{}
\end{lemma}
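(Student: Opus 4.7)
The plan is to extract each $b(1-b)t^\nu$ from $\Phi(p_\mu) = \mu^2 b(1-b)t^{4\mu}$ by a commutator-type trick against the generators $\Phi(e_\lambda)$ of $B$. Since $\mu \neq 0$ and $\kk$ has characteristic zero, Lemma~\ref{lem:pmu} immediately gives $b(1-b)t^{4\mu} \in I$.

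The heart of the argument is to compute, for each $\lambda \in \Gamma$, the difference
\[
\Phi(e_\lambda) \cdot b(1-b)t^{4\mu} - b(1-b)t^{4\mu} \cdot \Phi(e_\lambda),
\]
which lies in $I$ since both summands do. Using $\Phi(e_\lambda) = (a+b\lambda)t^\lambda$ together with the smash-product rule $t^\gamma f(a,b) = f(a+\gamma,b)t^\gamma$, both products acquire $b(1-b)$ as a left factor (because $b(1-b)$ is $\Gamma$-invariant), and the two resulting prefactors, namely $(a+b\lambda)$ on one side and $(a+4\mu+b\lambda)$ on the other, differ only by $-4\mu$. The difference therefore collapses to $-4\mu \cdot b(1-b) t^{\lambda+4\mu}$; dividing by $-4\mu$ and letting $\lambda$ range over $\Gamma$ yields $b(1-b)t^\nu \in I$ for every $\nu \in \Gamma$.

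Independence of $\mu$ is then immediate: the same argument applied to any other $0 \neq \mu' \in \Gamma$ shows that $\Phi(p_{\mu'})$ is a nonzero scalar multiple of $b(1-b)t^{4\mu'} \in I$, so $B\Phi(p_{\mu'})B \subseteq I$, and symmetry gives equality. For the final identification $I = b(1-b)\kk[a,b] \rtimes \Gamma$, the inclusion $\subseteq$ is clear because $b(1-b)T$ is already a two-sided ideal of $T$ containing $\Phi(p_\mu)$. For $\supseteq$, bootstrap from $b(1-b)t^\nu \in I$: left and right multiplication by $a = \Phi(e_0) \in B$ gives $\kk[a] \cdot b(1-b)t^\nu \subseteq I$, and a further bracket against $\Phi(e_\lambda)$ for $\lambda \neq 0$, after subtracting off the $\kk[a]$-part already in $I$, introduces a factor of $b^2(1-b)$; iterating yields every power $b^k(1-b)$ and hence $\kk[a,b] \cdot b(1-b) t^\nu \subseteq I$.

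The whole argument is essentially computational; the only subtlety is the cancellation in the bracket step, which depends crucially on $b(1-b)$ being fixed by the $\Gamma$-action on $\kk[a,b]$.
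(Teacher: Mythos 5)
Your proof is correct and follows essentially the same route as the paper. The key commutator identity against $\Phi(e_\lambda)$ is identical (the paper simply sets $\lambda = \nu - 4\mu$ directly), and your bootstrap for the final identification — getting $\kk[a]$ via $\Phi(e_0)=a$, then iterating against $\Phi(e_\lambda)$ and subtracting the $\kk[a]$-part to pick up successive powers of $b$ — is exactly the paper's induction on $n$ showing $b(1-b)b^n\kk[a]\rtimes\Gamma \subseteq I$.
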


\begin{proof}
We have
\[
\Phi(e_{\nu-4\mu})b(1-b)t^{4\mu}-b(1-b)t^{4\mu}\Phi(e_{\nu-4\mu})=
(\Phi(e_{\nu-4\mu})-\Phi(e_{\nu-4\mu})-4\mu)b(1-b)t^{\nu}=-4\mu b(1-b)t^\nu.
\]
Thus the first claim follows by Lemma \ref{lem:pmu}. 
Note that $I\subseteq b(1-b)\fk[a,b]\rtimes \Gamma$, and as $b(1-b)\in I$ and $a\in B$, we have $b(1-b)\fk[a]\rtimes \Gamma\subseteq I$. Since also $(a+b\mu)t^{\mu}\in B$, we easily obtain  by induction on $n$ that $b(1-b)b^n\fk[a]\rtimes \Gamma\subseteq I$ for all $n\geq 0$, and thus the last claim.
\end{proof}

\begin{proposition}\label{prop:noetherian}
The ideal $I$ is not finitely generated as a left or right ideal of $B$.
\end{proposition}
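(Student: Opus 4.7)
The plan is to exploit the inclusion $B\subseteq R=\kk[a,b]+(I(0,0)T\cap TI(0,1))$. A direct computation of graded pieces of this sum shows that for $\delta\in\Gamma\setminus\{0\}$, every $ft^\delta\in B_\delta$ has $f\in(a,b)\cap(a+\delta,b-1)$; in particular $f(0,0)=0$ and $f(-\delta,1)=0$. Combined with the identification $I=b(1-b)\fk[a,b]\rtimes\Gamma$ from Lemma~\ref{all}, this two-point vanishing is what drives both arguments below.

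Suppose for contradiction that $I=\sum_{j=1}^k By_j$ as a left ideal. Since $I$ is graded (it is generated by the homogeneous element $\Phi(p_\mu)$), we may take the $y_j$ homogeneous, so $y_j=p_j(a,b)\,b(1-b)\,t^{d_j}$ for some $p_j\in\fk[a,b]$ and $d_j\in\Gamma$. Since $\Gamma$ is infinite, pick $\nu\in\Gamma$ distinct from every $d_j$. Then $b(1-b)t^\nu\in I$, so there exist $g_j\in\fk[a,b]$ with $g_jt^{\nu-d_j}\in B_{\nu-d_j}$ such that
\[
b(1-b)t^\nu=\sum_j(g_jt^{\nu-d_j})y_j=\sum_j g_j(a,b)\,p_j(a+\nu-d_j,b)\,b(1-b)\,t^\nu.
\]
Comparing coefficients of $t^\nu$ and cancelling the nonzero element $b(1-b)$ of the domain $\fk[a,b]$ yields the polynomial identity $\sum_j g_j(a,b)\,p_j(a+\nu-d_j,b)=1$. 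But $\nu-d_j\neq 0$ for every $j$, so each $g_j$ vanishes at $(0,0)$ by the observation above; evaluation at $(0,0)$ gives $0=1$, a contradiction.

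The right-ideal case is entirely analogous. If $I=\sum_j y_jB$, writing $b(1-b)t^\nu=\sum_j y_j(g_jt^{\nu-d_j})$ and using $t^{d_j}g_j=g_j(a+d_j,b)t^{d_j}$ produces the identity $\sum_j p_j(a,b)\,g_j(a+d_j,b)=1$. The vanishing condition $g_j(-(\nu-d_j),1)=0$ now translates to $g_j(a+d_j,b)$ vanishing at the single common point $(-\nu,1)$; evaluation there again yields $0=1$.

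The only substantive input is the two-point vanishing description of $B_\delta$, but this is essentially immediate from the definition of the double idealizer $R$; the rest of the proof is a single point-evaluation, so no genuine obstacle stands in the way.
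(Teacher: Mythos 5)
Your proof is correct and follows essentially the same route as the paper: both arguments hinge on the fact that homogeneous elements of $B$ of nonzero degree $\delta$ have coefficient vanishing at $(0,0)$ (left case) and at $(-\delta,1)$ (right case), combined with the identification $I = b(1-b)\kk[a,b]\rtimes\Gamma$ from Lemma~\ref{all}, so that $b(1-b)t^\nu$ cannot lie in the one-sided ideal generated in degrees other than $\nu$. The paper obtains this vanishing from the explicit product formulas \eqref{left} and \eqref{right}, whereas you read it off from the inclusion $B\subseteq R$ into the double idealizer; these are the same computation packaged slightly differently.
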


\begin{proof}
We first compute
\begin{multline}\label{left}
(a+b\nu_1)t^{\nu_1}\cdots (a+b\nu_l)t^{\nu_l}p(a,b)b(1-b)t^{\lambda}=\\
(a+b\nu_1)\cdots (a+b\nu_l+\nu_1+\cdots+\nu_{l-1})p(a+\nu_1+\cdots+\nu_{l-1}+\nu_l,b)b(1-b)t^{\nu_1+\cdots+\nu_l+\lambda},
\end{multline} 
\begin{multline}\label{right}
p(a,b)b(1-b)t^{\lambda}(a+b\nu_1)t^{\nu_1}\cdots (a+b\nu_l)t^{\nu_l}=\\
p(a,b)b(1-b)(a+b\nu_1+\lambda)\cdots (a+b\nu_l+\lambda+\nu_1+\cdots+\nu_{l-1})t^{\lambda+\nu_1+\cdots+\nu_l}.
\end{multline}

Let us assume that $I$ is finitely generated as a left ideal of $B$.
Then there exist $\mu_1,\dots,\mu_k\in \Gamma$ such that $I=B(I_{\mu_1}+\cdots +I_{\mu_k})$. 
Let us take $\mu\neq \mu_i$, $1\leq i\leq k$. 
It follows from \eqref{left} that $(B(I_{\mu_1}+\cdots +I_{\mu_k}))_\mu$ is contained in $(a,b)b(1-b)t^\mu$, a contradiction to Lemma \ref{all}.

Let us assume now that $I$ is finitely generated as a right ideal in $B$.
Then there exist $\mu_1,\dots,\mu_k\in \Gamma$ such that $I=(I_{\mu_1}+\cdots +I_{\mu_k})B$. 
For $\mu\neq \mu_i$, $1\leq i\leq k$, we obtain from \eqref{right} that 
$((I_{\mu_1}+\cdots +I_{\mu_k})B)_\mu$ is contained in $(a+\mu,b-1)b(1-b)t^\mu$, which again contradicts Lemma \ref{all}. 
\end{proof}

\begin{remark}
Note that the same proof works if $\Gamma$ is a submonoid of $\fk$. 
Lemma \ref{all} yields in this case $b(1-b)t^{n\mu}\in I$, for $n\geq 4$. The proof of Proposition \ref{prop:noetherian}  can then be adapted in an obvious way to apply to this a slightly more general situation. In particular, $\Phi(U(W_+))$ is not noetherian, where $W_+$ is the subalgebra of $W$ generated by $\{e_n : n \geq 1\}$.  (This last statement is proved in \cite{SierraWalton2}
\end{remark}

We now show that the image $B_\beta$ of the map $\phi_\beta:U(W)\to B/(b-\beta)$ induced from $\Phi$ is noetherian for every $\beta\in \fk$. This is an analogue of \cite[Proposition 2.1]{SierraWalton2}.

\begin{lemma}\label{phi}
We have $B_0\cong \fk+a(\fk[a]\rtimes \Gamma)$, $B_1\cong \fk+(\fk[a]\rtimes \Gamma)a$, $B_\beta\cong \fk[a]\rtimes \Gamma$ for $\beta\neq 0,1$.
\end{lemma}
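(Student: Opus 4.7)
Recall that $B_\beta$ is, by construction, the unital $\kk$-subalgebra of $\kk[a]\rtimes\Gamma$ generated by the elements $f_\mu := \phi_\beta(e_\mu) = (a+\beta\mu)t^\mu$ for $\mu\in\Gamma$. In particular $a = f_0\in B_\beta$. The plan is to treat the three cases separately, each time proving both containments explicitly, with most of the work concentrated in the generic case $\beta\neq 0,1$.

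For the case $\beta = 0$, the generators are $at^\mu$. One inclusion is immediate: any product $f_{\mu_1}\cdots f_{\mu_n}$ equals $a(a+\mu_1)\cdots(a+\mu_1+\cdots+\mu_{n-1})t^{\mu_1+\cdots+\mu_n}$, whose leading factor of $a$ places it in $a(\kk[a]\rtimes\Gamma)$; together with $1\in B_0$ this gives $B_0\subseteq \kk + a(\kk[a]\rtimes\Gamma)$. For the reverse inclusion, $a\in B_0$ is a subalgebra generator, so $a^{k-1}\cdot at^\mu = a^k t^\mu\in B_0$ for all $k\geq 1$ and all $\mu\in\Gamma$, giving $\kk + a(\kk[a]\rtimes\Gamma)\subseteq B_0$. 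The case $\beta=1$ is formally symmetric: one checks that each product of generators has right-most coefficient divisible by $(a+\lambda)$ where $\lambda$ is its $\Gamma$-degree, so $B_1\subseteq \kk+(\kk[a]\rtimes\Gamma)a$; and conversely $(a+\mu)t^\mu\cdot a^k = (a+\mu)^{k+1}t^\mu$, and these elements span $(a+\mu)\kk[a]t^\mu$ as $k$ varies, so $(\kk[a]\rtimes\Gamma)a\subseteq B_1$.

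The main content is the case $\beta\neq 0,1$, where we must show $B_\beta = \kk[a]\rtimes\Gamma$, i.e., we must produce $t^\mu$ for every $\mu\in\Gamma$ from the generators $f_\mu$, despite the fact that each $f_\mu$ has a nontrivial polynomial in $a$ as coefficient. The idea is to exploit the two distinct degree-$2$ elements at grade $\mu$ available in $B_\beta$: namely $af_\mu$ and $f_{\mu_1}f_{\mu_2}$ for any decomposition $\mu_1+\mu_2=\mu$. Expanding using $t^{\mu_1}a=(a+\mu_1)t^{\mu_1}$, one computes
\[
f_{\mu_1}f_{\mu_2} - af_\mu \;=\; \mu_1\bigl[a + \beta\mu_1 + \beta^2\mu_2\bigr]t^\mu,
\]
so picking $\mu_1\in\Gamma\setminus\{0,\mu\}$ (possible since $\Gamma$ is infinite, being of rank $\geq 1$) and dividing by $\mu_1$ yields a \emph{second} linear-in-$a$ element at grade $\mu$, independent from $f_\mu = (a+\beta\mu)t^\mu$. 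Subtracting $f_\mu$ from it produces a scalar multiple of $t^\mu$, with coefficient proportional to $\beta(1-\beta)\mu_2$, which is nonzero precisely because $\beta\neq 0,1$ and $\mu_2=\mu-\mu_1\neq 0$. Hence $t^\mu\in B_\beta$ for all $\mu$, and combined with $a\in B_\beta$ this gives $\kk[a]\rtimes\Gamma\subseteq B_\beta$; the reverse inclusion is immediate.

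The only step that is not formal bookkeeping is the extraction of $t^\mu$ in the generic case, which rests on the small arithmetic identity above and on the ability to choose $\mu_1\in\Gamma\setminus\{0,\mu\}$. I expect that to be the one place where care is required; everything else is a direct computation with the multiplication rule $t^\mu a = (a+\mu)t^\mu$ and the description of the two ideals $a(\kk[a]\rtimes\Gamma)$ and $(\kk[a]\rtimes\Gamma)a$.
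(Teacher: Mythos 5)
Your proof is correct and follows essentially the same route as the paper: the generic case rests on the same computation (take $f_{\mu_1}f_{\mu_2}-af_{\mu_1+\mu_2}$, which is linear in $a$, then subtract the right multiple of $f_{\mu_1+\mu_2}$ to isolate $\beta(\beta-1)\mu_1\mu_2\, t^{\mu_1+\mu_2}$), and the paper simply declares the $\beta=0,1$ cases obvious where you spell them out. The only difference is cosmetic: the paper parametrises the product by $(\mu,\nu)$ with $\mu+\nu$ the target degree, while you parametrise by a decomposition $\mu_1+\mu_2=\mu$; both require choosing the two summands nonzero.
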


\begin{proof}
The lemma is obvious for $\beta=0,1$. 
Assume therefore that $\beta\neq 0,1$.
Let us compute
\[(a+\beta \mu)t^{\mu}(a+\beta\nu)t^{\nu}-a(a+\beta(\mu+\nu))t^{\mu+\nu}=
(\mu a+\beta\mu(\beta\nu+\mu))t^{\mu+\nu}\in B_\beta.
\]
Subtracting $\mu(a + b (\mu+\nu) t^{\mu+\nu}$, we thus have $\beta\mu\nu(\beta-1)t^{\mu+\nu}\in B_\beta$, and hence our claim.
\end{proof}

\begin{proposition}\label{prop:noetherian2}
$B_\beta$ is noetherian for every $\beta\in \fk$.
\end{proposition}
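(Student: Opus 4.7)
Proof plan. By Lemma~\ref{phi} there are three cases. For $\beta\notin\{0,1\}$, $B_\beta\cong\fk[a]\rtimes\Gamma$; since $\Gamma$ is finitely generated free abelian on a basis $\nu_1,\dots,\nu_n$, we have $\fk[a]\rtimes\Gamma\cong\fk[a][t_1^{\pm 1};\sigma_1]\cdots[t_n^{\pm 1};\sigma_n]$ with $\sigma_i(a)=a+\nu_i$, and iterated skew Laurent polynomial extensions by automorphisms preserve noetherianity.

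For $\beta=0$, set $R=\fk[a]\rtimes\Gamma$, so $B_0\cong\fk+aR$. Then $aR$ is a right ideal of $R$ and $B_0$ is precisely the idealizer $\mathbb{I}_R(aR)=\{r\in R\mid r\cdot aR\subseteq aR\}$. The plan is to invoke Robson's idealizer theorem, which requires: (i) that $R/aR$ is Artinian as a right $R$-module, and (ii) that $aR$ is finitely generated both as a left and as a right $B_0$-module. For (i), observe that $R/aR$ has $\fk$-basis $\{\overline{t^\nu}\}_{\nu\in\Gamma}$ with right $R$-action $\overline{t^\nu}\cdot p(a)t^\mu=p(\nu)\overline{t^{\nu+\mu}}$; in particular $\overline{t^\nu}\cdot t^{\mu-\nu}=\overline{t^\mu}$, so any nonzero element generates the module and $R/aR$ is a simple right $R$-module, in particular of finite length. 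For (ii), fix $0\ne\nu_1\in\Gamma$; I claim $\{a,at^{\nu_1}\}$ generates $aR$ as a left $B_0$-module. Direct computation yields
\[
B_0\cdot a = a\fk[a]\oplus\bigoplus_{\nu\ne 0}a(a+\nu)\fk[a]t^\nu,\quad
B_0\cdot(at^{\nu_1}) = a\fk[a]t^{\nu_1}\oplus\bigoplus_{\mu\ne\nu_1}a(a+\mu-\nu_1)\fk[a]t^\mu.
\]
In degrees $0$ and $\nu_1$ these already cover all of $a\fk[a]$ and $a\fk[a]t^{\nu_1}$ respectively, while for $\nu\notin\{0,\nu_1\}$ the polynomials $a+\nu$ and $a+\nu-\nu_1$ differ by the nonzero scalar $\nu_1$, hence are coprime in $\fk[a]$, so $(a+\nu)\fk[a]+(a+\nu-\nu_1)\fk[a]=\fk[a]$ and the two contributions combine to $a\fk[a]t^\nu$. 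The analogous computation with $a\cdot B_0$ and $at^{\nu_1}\cdot B_0$ shows the right-module generation. Robson's theorem then yields $B_0$ noetherian.

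For $\beta=1$, $B_1=\fk+Ra$ is the right idealizer of the left ideal $Ra$, and the mirror of the above argument applies. Alternatively, the map $\psi:R\to R$ given by $\psi(a)=a$, $\psi(t^\nu)=t^{-\nu}$ is an anti-automorphism of $R$ (since $\psi(t^\nu a)=\psi((a+\nu)t^\nu)=t^{-\nu}(a+\nu)=at^{-\nu}=\psi(a)\psi(t^\nu)$) sending $aR$ to $Ra$, so it restricts to an anti-isomorphism $B_0\xrightarrow{\sim} B_1$; hence $B_1\cong B_0^{\op}$ is noetherian. The main technical obstacle is the coprimality calculation underpinning finite generation of $aR$ over $B_0$, which relies on $\nu_1\ne 0$ in an essential way; this matches the geometric observation preceding the proposition that generic $\Gamma$-orbits on the invariant line $(b=\beta)\subseteq\AA^2$ are dense, which is precisely what makes each $B_\beta$ noetherian despite $B$ itself failing to be so by Proposition~\ref{prop:noetherian}.
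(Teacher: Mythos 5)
Your proof follows essentially the same idealizer-theoretic route as the paper, and the explicit computations you offer (that $R/aR$ is a simple right $R$-module, and that $\{a, at^{\nu_1}\}$ generates $aR$ as a left $B_0$-module via the coprimality argument) are correct and in fact more detailed than what the paper writes. Likewise the $\beta\notin\{0,1\}$ case is handled the same way. However, there is a genuine gap in how you get from one-sided to two-sided noetherianity.

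The issue is your condition (ii) and the claim that Robson's theorem yields $B_0$ noetherian on both sides. The standard idealizer theorem for a semimaximal right ideal $A\subseteq R$ with $S=\mathbb{I}(A)$ concludes that $S$ is \emph{right} noetherian provided $R$ is right noetherian and $R_S$ is finitely generated; your computation plus the fact that $(R/aR)_{B_0}$ is cyclic (generated by $\ol{t^{\nu_1}}$ for any $\nu_1\neq 0$) does give $R_{B_0}$ finitely generated, so $B_0$ is right noetherian. But the left-hand analogue fails: $R/aR$ is \emph{not} a finitely generated left $B_0$-module, since every homogeneous element of $B_0$ of nonzero degree lies in $a\fk[a]t^\mu$ and therefore acts as zero on $\ol{t^\nu}\in R/aR$, so each $\ol{t^\nu}$ generates only $\fk\ol{t^\nu}$ on the left. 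Consequently ${}_{B_0}R$ is not finitely generated and the left version of Robson's theorem does not apply to $B_0$; your condition (ii) on the left side (that $aR$ is finitely generated as a left $B_0$-module) is true but is not the hypothesis the theorem requires. Thus what you have actually established is only that $B_0$ is \emph{right} noetherian.

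Your anti-automorphism $\psi$ is correct and does give $B_1\cong B_0^{\op}$, but this only transfers right noetherianity of $B_0$ to \emph{left} noetherianity of $B_1$ and vice versa — it cannot promote one-sided to two-sided. Similarly, the ``mirror'' Robson argument on $B_1=\{r\in R: Ra\cdot r\subseteq Ra\}$ gives $B_1$ left noetherian, which via $\psi$ is the same statement as $B_0$ right noetherian; no new information. What is missing is the honest ring \emph{isomorphism} $B_0\cong B_1$ used in the paper: conjugation by $a$, $\alpha+as\mapsto\alpha+sa$, is a well-defined ring isomorphism $\fk+aR\to\fk+Ra$ (one can check directly that $(\alpha+as)(\beta+at)=\alpha\beta+a(\alpha t+s\beta+sat)$ maps to $\alpha\beta+(\alpha t+s\beta+sat)a=(\alpha+sa)(\beta+ta)$). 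With this isomorphism, $B_0$ right noetherian gives $B_1$ right noetherian, and $B_1$ left noetherian gives $B_0$ left noetherian, closing the argument. Add this conjugation isomorphism and the proof is complete; without it, you have shown only that each of $B_0$ and $B_1$ is one-sided noetherian.
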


\begin{proof}
For $\beta\neq 0,1$ this follows by \cite[Theorem 4.5]{McConnellRobson} using Lemma \ref{phi}. 
Let us note that $B_0\cong B_1$ by conjugation with $a$. 
It thus suffices to prove that $B_0$ is right noetherian and $B_1$ is left noetherian. We show that $B_0$ is right noetherian, and following the same argument one can show that $B_1$ is left noetherian. 

We first note that $I=a(\fk[a]\rtimes\Gamma)$ is a maximal right ideal in $C=\fk[a]\rtimes \Gamma$. To see this, let $J\neq I$ be a right ideal which contains $I$. 
Take an element $c=\sum \alpha_\mu t^\mu\neq 0$ in $J$ with the minimal number of nonzero coefficients. 
Since $ca=\sum \alpha_\mu(a+\mu)t^\mu\in J$ and hence $\sum \alpha_\mu \mu t^\mu\in J$, the minimality assumption implies  that $J=\fk[a]\rtimes \Gamma$.

The proposition now follows by \cite[Theorem~2.2]{Robson} using Lemma \ref{phi}. 
\end{proof}

\begin{remark}\label{rem:primitive}
We remark that for any  $\beta$ the modules $V(\alpha, \beta)$ are all faithful over $B_\beta$, and it follows easily that the $B_\beta$ are primitive.  
In general, the primitive factors of $U(W_\Gamma)$ are unknown, even for $\Gamma = \ZZ$.
\end{remark}


\end{document}